\newtheorem{theorem}{Theorem}[section]
\newcommand{\nc}{\newcommand}
\nc{\on}{\operatorname}
\nc{\Hom}{\on{Hom}}
\nc{\Ind}{\on{Ind}}
\nc{\Irr}{\on{Irr}}
\nc{\Res}{\on{Res}}
\title{Convolution Algebras for Finite Reductive Monoids}
\author{Jared Marx-Kuo, \and Vaughan McDonald, \and John M. O'Brien, \and Alexander Vetter}
\newcommand{\Addresses}{{
  \bigskip

  \noindent Jared Marx-Kuo, \textsc{Department of Mathematics, The University of Chicago, Chicago, IL 60637}\par\nopagebreak
  \textit{E-mail address}: \texttt{jmarxkuo@uchicago.edu}

  \medskip

  \noindent Vaughan McDonald, \textsc{Department of Mathematics, Harvard University, Cambridge, MA 02138}\par\nopagebreak
  \textit{E-mail address}: \texttt{vmcdonald@college.harvard.edu}

  \medskip

  \noindent John M. O'Brien, \textsc{Department of Mathematics, Kansas State University, Manhattan, KS 66506}\par\nopagebreak
  \textit{E-mail address}: \texttt{colbyjobrien@ksu.edu}
  
  \medskip
  
  \noindent Alexander Vetter, \textsc{Department of Mathematics and Statistics, Villanova University, Villanova, PA 19085}\par\nopagebreak
  \textit{E-mail address}: \texttt{avetter@villanova.edu}

}}
\date{\today}
\begin{document}

\maketitle

\begin{abstract}
For an arbitrary finite monoid $M$ and subgroup $K$ of the unit group of $M$, we prove that there is a bijection between irreducible representations of $M$ with nontrivial $K$-fixed space and irreducible representations of $\mathcal{H}_K$, the convolution algebra of $K\times K$-invariant functions from $M$ to $F$, where $F$ is a field of characteristic not dividing $|K|$.  When $M$ is reductive and $K = B$ is a Borel subgroup of the group of units, this indirectly provides a connection between irreducible representations of $M$ and those of $F[R]$, where $R$ is the Renner monoid of $M$.  We conclude with a quick proof of Frobenius Reciprocity for monoids for reference in future papers.
    
\end{abstract}

\section{Introduction}
\subsection{Motivation}
Let $M$ be a reductive monoid over a finite field.  Let $G(M)$ be the unit group of $M$, a connected reductive group, with maximal torus $T$ contained in Borel subgroup $B$.  Recall that $M$ has the Renner decomposition $M = \bigsqcup_{r\in R} B\underline{r} B,$ where $R$, the Renner monoid of $M$, plays the role of the Weyl group of a connected reductive group.  It is well-known that $\mathcal{H}(M,B)$, the $B\times B$-invariant convolution algebra of functions from $M$ to $\mathbb{C}$, is isomorphic to the monoid algebra $\mathbb{C}[R]$ of $R$, just as the equivalent convolution algebra $\mathcal{H}(G,B)$ of a connected reductive group is isomorphic to the group algebra of the Weyl group.  

In the group case, the Borel-Matsumoto theorem implies a bijection between the irreducible representations $(\pi, V)$ of $G$ with nonzero Borel-fixed space $V^B = \{v\in V: \pi(b)v = v\;\forall b\in B\}$ and irreducible representations of $\mathcal{H}(G,B)$. Since the representation theory of $\mathcal{H}(G,B)$ reflects the representation theory of the Weyl group of $G$, the Borel-Matsumoto Theorem classifies many irreducible representations of $G$.  

In this paper, we prove an analogous result to the Borel-Matsumoto theorem for finite monoids.  We prove that, for $K$ a subgroup of $G(M)$, there is a bijection between irreducible representations of $M$ with nonzero $K$-fixed subspaces and representations of the convolution algebra of $K\times K$-invariant functions from $M$ to $F$, where $F$ is of characteristic not dividing $|K|$.  When $M$ is reductive, $K = B$, and $F = \mathbb{C}$, we get the desired connection between representation theory of $M$ and that of its Renner monoid via $\mathcal{H}(M,B)$.

We hope to extend the result to the case of $p$-adic reductive monoids.  For $p$-adic reductive groups, a nearly identical proof replacing summation with integration with respect to a Haar measure works.  However, subtleties related to the nature of smooth representations of monoids prevented a direct extension of the proof from that of finite monoids.  In a future paper, we hope to find an alternative proof.

\section{A Borel-Matsumoto Theorem for Finite Monoids}
\newtheorem{lemma}{Lemma}
Let $M$ be a finite monoid, $G(M)$ the group of units of $M$, $K$ a subgroup of $G(M)$, and $F$ a field of characteristic not dividing $|K|$. 

For $\phi, \psi: M\rightarrow F$, Godelle \cite{godelle} defines the convolution product $\phi * \psi$ by 
\begin{equation*}
    (\phi*\psi)(m) = \sum\limits_{yz=m}\phi(y)\psi(z)
\end{equation*}

Similarly, for $(\pi, V)$ a representation of M and $\phi$ as above define $\pi(\phi)$ by

\begin{equation*}
    \pi(\phi)v = \sum\limits_{x\in M}\phi(x)\pi(x)v
\end{equation*}


\newtheorem{proposition} {Proposition}
\begin{proposition}
For $\phi, \psi\in \mathcal{H}$, $\pi(\phi*\psi) = \pi(\phi)\circ\pi(\psi).$
\end{proposition}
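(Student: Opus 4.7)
The plan is to verify this identity by direct expansion of definitions, applied to an arbitrary vector $v \in V$. The key tools are: (i) the definition of the convolution product $(\phi * \psi)(m) = \sum_{yz=m}\phi(y)\psi(z)$; (ii) the definition of $\pi(\phi)v = \sum_{x \in M}\phi(x)\pi(x)v$; and (iii) the fact that $\pi$, being a representation of the monoid $M$, is multiplicative, so $\pi(yz) = \pi(y)\pi(z)$ for all $y,z \in M$.

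Starting from the left-hand side, I would first unfold $\pi(\phi*\psi)v$ using the definition of $\pi$ applied to $\phi*\psi$, obtaining a double sum indexed by $m \in M$ and pairs $(y,z)$ with $yz = m$. Then I would apply the multiplicativity of $\pi$ to replace $\pi(m)$ by $\pi(y)\pi(z)$. The critical reindexing step is observing that summing over $m \in M$ and then over all factorizations $yz = m$ is the same as summing over all pairs $(y,z) \in M \times M$; this is where finiteness of $M$ justifies the interchange trivially. After this reindexing the double sum factors as $\sum_{y}\phi(y)\pi(y)\bigl(\sum_{z}\psi(z)\pi(z)v\bigr)$, which by definition equals $\pi(\phi)\pi(\psi)v$.

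There is no genuine obstacle here; the proposition is essentially a book-keeping exercise, and it is presumably included to fix notation and to reassure the reader that the convolution algebra $\mathcal{H}$ acts on $V$ through $\pi$. The only mild subtlety worth mentioning explicitly is that the equivalence of the indexed sums relies on the fact that the map $M \times M \to M$, $(y,z) \mapsto yz$, need not be injective, so a single $m$ may be counted through multiple $(y,z)$; the convolution sum is precisely designed to account for this multiplicity, which is why the identity comes out cleanly.
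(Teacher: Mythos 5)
Your proposal is correct and is essentially the same argument as the paper's: expand definitions, use multiplicativity of $\pi$, and reindex between the sum over $m$ with its factorizations $yz=m$ and the sum over all pairs $(y,z)$. The only difference is direction — the paper starts from $\pi(\phi)\circ\pi(\psi)$ and regroups into $\pi(\phi*\psi)$, while you run the same computation in reverse.
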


\begin{proof}
Consider $\pi(\phi)\circ\pi(\psi).$  We have the following:

\begin{align*}
    (\pi(\phi)\circ\phi(\psi))v &= \sum\limits_{x\in M}\phi(x)\pi(x)\sum\limits_{y\in M}\psi(y)\pi(y)v) \\
    &=\sum\limits_{x,y\in M} \phi(x)\psi(y)\pi(x)\pi(y)v \\
    &=\sum\limits_{x,y\in M} \phi(x)\psi(y)\pi(xy)v \\
    &=\sum\limits_{z\in M}\sum\limits_{xy = z} \phi(x)\psi(y)\pi(z)v \\
    &=\sum\limits_{z\in M}(\phi*\psi)(z)\pi(z)v \\
    &= \pi(\phi*\psi)v
\end{align*}

Thus $\pi(\phi)\circ\pi(\psi) = \pi(\phi*\psi)$.
\end{proof}

Let $\mathcal{H}$ be the $F$-algebra of functions from $M$ to $F$ under addition and convolution.  Define, for $v\in V$, 

\begin{equation*}
    \mathcal{H}v=\{\pi(\phi)v:  \phi\in\mathcal{H}\}.
\end{equation*}

Define an action of $M$ on $\mathcal{H}v$ by $m \cdot (\pi(\phi)v) = \pi(m)\pi(\phi)v$.  Define $f_m: M\rightarrow F$ by $f_m(m) = 1, f_m(x) = 0$ for $x\neq m$.  Since $\pi(f_m)v = \pi(m)v$, then $\mathcal{H}v$ is closed under action by $M$.  Thus, it is a subrepresentation. 

Similarly, let $\mathcal{H}_K$ be the $F$-algebra of functions from $M$ to $F$ under convolution that are constant on double-cosets of K; i.e. $\phi: M\rightarrow F$ such that $\phi(m)=\phi(k_1mk_2)$ for all $k_1, k_2 \in K$.  Furthermore, let $V^K = \{v \in V \; | \; \pi(k) v = v \quad \forall  k \in K\}$.

\begin{theorem}
Let $(\pi,V)$ be an irreducible representation of M with $V^K\neq\{0\}$. Then $V^K$ is irreducible as an $\mathcal{H}_K$-module.
\end{theorem}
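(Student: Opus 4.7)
The plan is to exploit the idempotent
\[
e_K \;:=\; \frac{1}{|K|}\sum_{k\in K} f_k \;\in\; \mathcal{H},
\]
which is well-defined since $\mathrm{char}(F) \nmid |K|$. First I would establish three easy facts: (i) $e_K * e_K = e_K$; (ii) $\pi(e_K)$ is a projection of $V$ onto $V^K$, since it clearly fixes $V^K$ pointwise and its image lies in $V^K$ by reindexing the defining sum; (iii) $\mathcal{H}_K = e_K * \mathcal{H} * e_K$. Fact (iii) follows from the direct calculation
\[
(e_K * \phi)(m) \;=\; \tfrac{1}{|K|}\textstyle\sum_{k \in K} \phi(k^{-1} m),
\]
which uses that each $k \in K$ is a unit of $M$, so $kz = m$ has the unique solution $z = k^{-1}m$. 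Hence $e_K * \phi = \phi$ iff $\phi$ is left $K$-invariant, and the symmetric computation yields the analogous right-hand statement.

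With these in hand the irreducibility argument is short. Fix a nonzero $v \in V^K$. The subspace $\mathcal{H} v$ is stable under the $M$-action by the same $f_m$ argument used in the excerpt, and it contains $v = \pi(f_1) v$ (where $1$ is the identity of $M$), hence is a nonzero subrepresentation of $V$. By irreducibility of $(\pi, V)$, $\mathcal{H} v = V$. Applying $\pi(e_K)$ and using $\pi(e_K) v = v$ together with Proposition~1,
\[
\pi(\mathcal{H}_K) v \;=\; \pi(e_K * \mathcal{H} * e_K) v \;=\; \pi(e_K)\,\pi(\mathcal{H})\,\pi(e_K)\, v \;=\; \pi(e_K)\,\pi(\mathcal{H})\, v \;=\; \pi(e_K) V \;=\; V^K.
\]
Thus every nonzero vector in $V^K$ generates $V^K$ as an $\mathcal{H}_K$-module, which is exactly irreducibility.

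I expect the only genuinely delicate point to be the identification $\mathcal{H}_K = e_K * \mathcal{H} * e_K$: when expanding $(e_K * \phi)(m) = \sum_{yz=m} e_K(y)\phi(z)$, one must know that for $k \in K$ the equation $kz = m$ has a unique solution $z = k^{-1}m$ in $M$, and this uses precisely the hypothesis that $K$ is a subgroup of the unit group $G(M)$ and not merely a submonoid of $M$. Everything else is formal manipulation of the idempotent and the irreducibility of $V$.
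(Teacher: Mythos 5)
Your proof is correct, and it reorganizes the paper's argument rather than merely reproducing it. The paper, following Bump, fixes a nonzero $u\in V^K$ and a target $v\in V^K$, uses irreducibility to produce $\psi\in\mathcal{H}$ with $\pi(\psi)u=v$, and then verifies by an explicit change-of-variables computation that the $K\times K$-average $\phi(x)=\frac{1}{|K|^2}\sum_{k_1,k_2\in K}\psi(k_1xk_2)$ lies in $\mathcal{H}_K$ and still satisfies $\pi(\phi)u=v$. Your averaging operator is exactly the same one, but you package it as the idempotent $e_K$ and isolate the structural facts $e_K*e_K=e_K$, $\pi(e_K)V=V^K$, and $\mathcal{H}_K=e_K*\mathcal{H}*e_K$, after which the conclusion is a one-line consequence of Proposition 1 together with $\mathcal{H}v=V$. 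The paper's hands-on computation only needs one inclusion (that $K\times K$-averages land in $\mathcal{H}_K$); your corner-algebra formulation needs both inclusions but buys more in return: it makes explicit that $V^K$ is actually an $\mathcal{H}_K$-submodule (via $\pi(\mathcal{H}_K)v\subseteq\pi(e_K)\pi(\mathcal{H})\pi(e_K)v\subseteq V^K$), a point the paper leaves implicit, and it sets up the standard $e\mathcal{H}e$-module formalism that could equally streamline the converse half of the Borel--Matsumoto statement proved later in the paper. You are also right about where the hypotheses enter: the identity $\mathcal{H}_K=e_K*\mathcal{H}*e_K$ uses that elements of $K$ are units, so $kz=m$ has the unique solution $z=k^{-1}m$; the paper's change of variables $x\mapsto k_1^{-1}xk_2^{-1}$ relies on precisely the same fact, and both arguments use $\operatorname{char}F\nmid|K|$ only through the factor $1/|K|$.
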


\begin{proof}
We follow Bump's proof of the group case closely \cite{bump}.  We claim that, for all nonzero $u\in V^K$, that $\mathcal{H}_Ku$:= \{$\pi(\phi)u: \phi \in \mathcal{H}_K$\} equals $V^K$.  In other words, we wish to show that, for all $v\in V^K$ there exists $\phi\in\mathcal{H}_K$ such that $\pi(\phi)u = v$.

Since $(\pi,V)$ is an irreducible representation of M, there are no proper non-trivial subrepresentations in V.  Because there is an M-action on $\mathcal{H}u\neq\{0\}$, then $\mathcal{H}u = V$.  Thus there exists $\psi\in\mathcal{H}$ such that $\pi(\psi)u = v$.

Define $\phi\in\mathcal{H}$ by, for $x\in M$ 
\begin{equation*}
    \phi(x) = \frac{1}{|K|^2}\sum\limits_{k_1,k_2\in K}\psi(k_1xk_2)
\end{equation*}
Since $\phi$ must be invariant over left and right cosets of K, $\phi$ lies in $\mathcal{H}_K$.  Now consider the following:

\begin{align*}
    \pi(\phi)u = \frac{1}{|K|^2}\sum\limits_{k_1,k_2\in K}\sum\limits_{x\in M}\psi(k_1xk_1)\pi(x)u
\end{align*}
Notice that $x\mapsto k_1^{-1}xk_2^{-1}$ is a bijection from M to M, as it has an inverse $x\mapsto k_1xk_2$.  Thus we can make the following change of variables:

\begin{align*}
    \pi(\phi)u &= \frac{1}{|K|^2}\sum\limits_{k_1,k_2\in K}\sum\limits_{x\in M}\psi(x)\pi(k_1^{-1}xk_2^{-1})u \\
    &= \frac{1}{|K|^2}\sum\limits_{k_1,k_2\in K}\sum\limits_{x\in M}\psi(x)\pi(k_1)^{-1}\pi(x)\pi(k_2)^{-1}u.
\end{align*}
Since $u\in V^K$, we have that $\pi(k_2)^{-1}u = u$.  Thus,

\begin{align*}
    \pi(\phi)u &= \frac{1}{|K|}\sum\limits_{k_1\in K}\sum\limits_{x\in M}\psi(x)\pi(k_1)^{-1}\pi(x)u \\
    &= \frac{1}{|K|}\sum\limits_{k_1\in K}\pi(k_1)^{-1}\sum\limits_{x\in M}\psi(x)\pi(x)u \\
    &=\frac{1}{|K|}\sum\limits_{k_1\in K}\pi(k_1)^{-1}\pi(\psi)u.
\end{align*}
Since $\pi(\psi)u = v$ and $v\in V^K$,

\begin{align*}
    \pi(\phi)u &= \frac{1}{|K|}\sum\limits_{k_1\in K}\pi(k_1)^{-1}v = v.
\end{align*}

Thus, for all $v\in V^K$ there exists $\phi\in\mathcal{H}_K$ such that $\pi(\phi)u = v$.  Thus, $V^K$ is irreducible as an $\mathcal{H}_K$-module.
\end{proof}

Denote, for $(\pi,V)$ a representation of M, let $(\pi|_G,V)$ be the restricted representation of $G(M)$ defined by $\pi|_G(g)=\pi(g)$ for $g\in G(M)$.  Define the contragredient representation of $G(M)$ $(\hat{\pi}|_G, \hat{V})$ by $\langle\pi|_G(g)v, \hat{v}\rangle = \langle v, \hat{\pi}|_G(g^{-1})\hat{v}\rangle$ for all $g\in G(M)$. 

\begin{lemma}
Let l: $V^K \rightarrow F$ be a linear functional.  Then there exists $\hat{v}\in\hat{V}^K$ such that for all $v\in V^K$, l(v) = $\langle v,\hat{v}\rangle$. \cite{bump}
\end{lemma}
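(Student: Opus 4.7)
The plan is to extend $l$ from $V^K$ to a linear functional on all of $V$ and then symmetrize over $K$ to obtain a $K$-invariant extension. I would carry this out in three short steps.

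First, since $V^K$ is a subspace of the finite-dimensional space $V$, I would pick any linear complement $W$ with $V = V^K \oplus W$ and define $\hat{v}_0 \in \hat{V}$ by $\langle v, \hat{v}_0 \rangle = l(v)$ for $v \in V^K$ and $\langle w, \hat{v}_0 \rangle = 0$ for $w \in W$. This produces a (typically non-$K$-invariant) element of $\hat{V}$ whose restriction to $V^K$ already agrees with $l$.

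Next, I would average over $K$ by setting
$$\hat{v} = \frac{1}{|K|}\sum_{k \in K} \hat{\pi}|_G(k)\hat{v}_0,$$
which makes sense because $\operatorname{char}(F) \nmid |K|$. Since $\hat{\pi}|_G$ is a representation of the subgroup $K$, the standard reindexing $k \mapsto k_0 k$ shows that $\hat{\pi}|_G(k_0)\hat{v} = \hat{v}$ for every $k_0 \in K$, so $\hat{v} \in \hat{V}^K$.

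Finally, I would verify that $\hat{v}$ still represents $l$ on $V^K$. For $v \in V^K$, the defining property of the contragredient gives
$$\langle v, \hat{v}\rangle = \frac{1}{|K|}\sum_{k \in K}\langle \pi|_G(k^{-1})v, \hat{v}_0\rangle,$$
and the $K$-invariance of $v$ collapses each summand to $\langle v, \hat{v}_0\rangle = l(v)$, yielding $\langle v, \hat{v}\rangle = l(v)$ as desired.

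No step is genuinely difficult here; this is the usual group-averaging trick applied to the contragredient representation of $G(M)$. The only subtlety worth flagging is the invertibility of $|K|$ in $F$, which is precisely what the characteristic hypothesis on $F$ provides, and the choice of sign convention on the contragredient, which must be respected when computing $\langle v, \hat{\pi}|_G(k)\hat{v}_0\rangle$.
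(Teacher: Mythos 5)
Your proposal is correct and follows essentially the same route as the paper: extend $l$ to a functional $\hat{v}_0$ on all of $V$ and average over $K$ to get $\hat{v} = \frac{1}{|K|}\sum_{k\in K}\hat{\pi}|_G(k)\hat{v}_0 \in \hat{V}^K$, then use $K$-invariance of $v$ to see $\langle v,\hat{v}\rangle = l(v)$. Your version is in fact slightly more explicit than the paper's, since you spell out the extension via a complement and verify $\hat{v}\in\hat{V}^K$ by reindexing, steps the paper leaves implicit.
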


\begin{proof}
    Let $\hat{v}_0$ be a linear functional on V that restricts to l on $V^K$.
    
    Define $\hat{v} = \frac{1}{|K|}\sum_{k\in K}\hat{\pi}|_G(k)\hat{v}_0$.  For $v\in V^K$, then, we have the following equalities:
    
    \begin{align*}
        \langle v, \hat{v}\rangle &= \frac{1}{|K|}\sum\limits_{k\in K}\langle v, \hat{\pi}|_G(k)\hat{v}_0\rangle \\ 
        &=\frac{1}{|K|}\sum\limits_{k\in K}\langle\pi|_G(k)^{-1}v, \hat{v}_0\rangle \\
        &= \frac{1}{|K|}\sum\limits_{k\in K}\langle\pi(k)^{-1}v, \hat{v}_0\rangle \\
        &= \frac{1}{|K|}\sum\limits_{k\in K}\langle v, \hat{v}_0\rangle \\
        &= l(v)
    \end{align*}
\end{proof}
\begin{lemma}
If $V^K\neq 0$ then $\hat{V}^K\neq 0$. \cite{bump}
\end{lemma}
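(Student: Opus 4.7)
The plan is to deduce this lemma as a direct corollary of the previous one. The previous lemma says that every linear functional on $V^K$ is represented (via the pairing) by some element of $\hat V^K$, so to produce a nonzero element of $\hat V^K$ it suffices to produce a nonzero linear functional on $V^K$. Since $V^K \neq 0$, such functionals are easy to construct.

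Concretely, I would first choose a nonzero vector $v_0 \in V^K$, which is possible by hypothesis. Extending $\{v_0\}$ to a basis of $V^K$ and using the dual basis (this is where we implicitly use that $V^K$ is finite-dimensional, which follows from $V$ itself being finite-dimensional as $M$ is finite), I obtain a linear functional $l \colon V^K \to F$ with $l(v_0) = 1$. By the preceding lemma, there exists $\hat v \in \hat V^K$ such that $\langle v, \hat v \rangle = l(v)$ for every $v \in V^K$. Then
\begin{equation*}
    \langle v_0, \hat v\rangle \;=\; l(v_0) \;=\; 1 \;\neq\; 0,
\end{equation*}
so $\hat v$ cannot be the zero functional on $V$, and therefore $\hat V^K \neq 0$.

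Essentially no obstacle arises: the content is entirely packaged into the previous lemma, and the role of this lemma is just to record the contrapositive flavor of nonvanishing. The only thing worth double-checking is that the pairing between $V$ and $\hat V$ is nondegenerate, so that $\langle v_0, \hat v\rangle \neq 0$ genuinely implies $\hat v \neq 0$; this is built into the definition of the contragredient representation used in the excerpt.
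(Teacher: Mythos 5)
Your argument is correct and is exactly the intended deduction: the paper itself omits the proof (deferring to Bump), and the standard argument there is precisely yours --- pick $v_0 \in V^K$ nonzero, choose a functional $l$ on $V^K$ with $l(v_0) \neq 0$, and apply the preceding lemma to obtain $\hat v \in \hat V^K$ with $\langle v_0, \hat v\rangle = l(v_0) \neq 0$, so $\hat v \neq 0$. No gap; your parenthetical about finite-dimensionality is not even needed, since one can always extend $\{v_0\}$ to a basis to build such a functional.
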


\begin{lemma}
Let $R$ be an algebra over $F$, and $N_1, N_2$ simple $R$-modules that are finite-dimensional as vector spaces over F.  If there exist linear functionals $L_i: N_i \rightarrow F$ and $n_i\in N_i$ such that $L_i(n_i)\neq 0$ and $L_1(rn_1) = L_2(rn_2)$ for all $r\in R$, then $N_1\cong N_2$ as R-modules. \cite{bump}
\end{lemma}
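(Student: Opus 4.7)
The plan is to construct an explicit $R$-module isomorphism $\phi : N_1 \to N_2$. Since $L_1(n_1) \neq 0$, the vector $n_1$ is nonzero, and simplicity of $N_1$ forces the cyclic $R$-submodule generated by $n_1$ to equal $N_1$. For brevity I will assume $R$ is unital (as is the Hecke algebra $\mathcal{H}_K$ of interest; the non-unital case merely requires carrying an extra scalar term $c n_i$ throughout), so that $N_1 = R n_1$ and similarly $N_2 = R n_2$. Then define $\phi : N_1 \to N_2$ by $\phi(r n_1) := r n_2$.

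The crux is well-definedness: I must show $r n_1 = 0 \Rightarrow r n_2 = 0$. Suppose instead $r n_1 = 0$ while $r n_2 \neq 0$. Then $R \cdot r n_2$ is a nonzero $R$-submodule of the simple module $N_2$, hence all of $N_2$. But for every $s \in R$ the hypothesis yields
\begin{equation*}
L_2(s \cdot r n_2) \;=\; L_2((sr) n_2) \;=\; L_1((sr) n_1) \;=\; L_1(s \cdot 0) \;=\; 0,
\end{equation*}
so $L_2$ vanishes on all of $N_2 = R \cdot r n_2$, contradicting $L_2(n_2) \neq 0$. A symmetric argument gives $r n_2 = 0 \Rightarrow r n_1 = 0$, so $\phi$ is a well-defined injection.

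Finally, $\phi(s \cdot r n_1) = (sr) n_2 = s \cdot \phi(r n_1)$ makes $\phi$ an $R$-linear map, and surjectivity is immediate since every element of $N_2$ has the form $r n_2 = \phi(r n_1)$. Hence $N_1 \cong N_2$ as $R$-modules. The main obstacle is the well-definedness step; the key insight is to exploit the identity $L_1(r n_1) = L_2(r n_2)$ for \emph{all} $r \in R$ (not merely at the witnessing elements $n_i$) in order to transport a kernel relation from $N_1$ across to $N_2$ via the simplicity of $N_2$. The finite-dimensionality hypothesis is not used directly in the construction, though it guarantees that the simple modules are well-behaved enough that cyclic submodules are genuine submodules in the expected sense.
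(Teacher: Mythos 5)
Your proof is correct. Note that the paper itself gives no argument for this lemma at all --- it is stated with a citation to Bump and used as a black box in Lemma 5 --- so your write-up supplies a proof rather than paralleling one. Your construction is sound: $L_i(n_i)\neq 0$ gives $n_i\neq 0$, simplicity gives $N_i = Rn_i$ (the algebras in play, $F[M]$ and $\mathcal{H}_K$, are unital, so your standing assumption is harmless), and the well-definedness step is handled exactly right: if $rn_1=0$ but $rn_2\neq 0$, then $R\,rn_2=N_2$ by simplicity while $L_2(srn_2)=L_1(srn_1)=0$ for all $s$, forcing $L_2\equiv 0$ and contradicting $L_2(n_2)\neq 0$; the symmetric statement gives injectivity, and $R$-linearity and surjectivity are immediate. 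For comparison, the textbook argument this paper points to runs through the graph: one takes the $R$-submodule $N\subseteq N_1\oplus N_2$ generated by $(n_1,n_2)$, observes that the functional $(x,y)\mapsto L_1(x)-L_2(y)$ kills $N$ but not $(0,n_2)$ or $(n_1,0)$, concludes that the projections $N\to N_i$ have zero kernel, and composes $p_2\circ p_1^{-1}$. Your direct well-definedness check is the same idea unpacked, arguably more elementary since it never introduces the auxiliary module. You are also right that finite-dimensionality is not needed for this argument; it is carried in the statement only because of how the lemma is invoked downstream, so flagging it as unused is fair rather than a gap.
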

We particularly care about the case when two representations $(\pi_i,V_i)$ share matrix coefficients $\langle \pi_i(m)v, \hat{v}_0 \rangle$ for all $m\in M$.  

\begin{lemma}
Let $(\pi,V)$ and $(\sigma, W)$ be two irreducible representations of $M$ with nonzero matrix coefficients $\langle\pi(m)v, \hat{v}_0\rangle = \langle\sigma(m)w, \hat{w}_0\rangle$ for some v, $v_0$, w, $w_0$, and all $m\in M$.  Then $(\pi,V)\cong(\sigma,W)$.
\end{lemma}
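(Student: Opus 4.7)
The plan is to invoke the preceding algebraic lemma with the algebra $R = \mathcal{H}$, simple modules $N_1 = V$ and $N_2 = W$, and linear functionals coming from pairing against $\hat{v}_0$ and $\hat{w}_0$. First I would verify that $V$ and $W$ are indeed simple as $\mathcal{H}$-modules: since the indicator functions $\{f_m : m\in M\}$ form an $F$-basis of $\mathcal{H}$ and $\pi(f_m) = \pi(m)$, the $\mathcal{H}$-submodules of $V$ coincide with the $M$-subrepresentations, so irreducibility of $(\pi,V)$ (and likewise $(\sigma,W)$) as $M$-representations transfers. Finite-dimensionality over $F$ is automatic because $M$ is finite.

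Next I would define $L_1 : V \to F$ by $L_1(x) = \langle x, \hat{v}_0\rangle$ and $L_2 : W \to F$ by $L_2(y) = \langle y, \hat{w}_0\rangle$, with candidate base elements $n_1 = v$ and $n_2 = w$. The intertwining condition then follows by a direct expansion: for each $\phi \in \mathcal{H}$,
\[
L_1(\pi(\phi)v) = \sum_{x\in M}\phi(x)\langle \pi(x)v,\hat{v}_0\rangle = \sum_{x\in M}\phi(x)\langle \sigma(x)w,\hat{w}_0\rangle = L_2(\sigma(\phi)w),
\]
using the pointwise hypothesis on matrix coefficients. So $L_1(\phi\cdot n_1) = L_2(\phi\cdot n_2)$ for every $\phi\in\mathcal{H}$ at essentially no cost.

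The one step that requires care, and which I expect to be the main obstacle, is the nonvanishing requirement $L_i(n_i)\neq 0$: the matrix coefficient is nonzero as a function on $M$, but it may well vanish at the identity, so we cannot guarantee $\langle v,\hat{v}_0\rangle\neq 0$ outright. To get around this I would pick some $m_0 \in M$ with $\langle \pi(m_0)v,\hat{v}_0\rangle \neq 0$ and replace $v$ by $\pi(m_0)v$ and $w$ by $\sigma(m_0)w$. Associativity in $M$ preserves the matrix coefficient identity, since $\langle \pi(m)\pi(m_0)v,\hat{v}_0\rangle = \langle \pi(mm_0)v,\hat{v}_0\rangle = \langle \sigma(mm_0)w,\hat{w}_0\rangle = \langle \sigma(m)\sigma(m_0)w,\hat{w}_0\rangle$ for every $m\in M$, and after the replacement $L_1(n_1) = L_2(n_2) \neq 0$ as required. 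The preceding lemma then produces an $\mathcal{H}$-module isomorphism $V\cong W$, which, because the $\mathcal{H}$-action extends the $M$-action through the spanning set $\{f_m\}$, is exactly an isomorphism of $M$-representations $(\pi,V)\cong(\sigma,W)$.
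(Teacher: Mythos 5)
Your proposal is correct and takes essentially the same route as the paper: the paper turns $V$ and $W$ into simple $F[M]$-modules (your $\mathcal{H}$ is the same algebra, via the basis $\{f_m\}$ with $f_{m_1}*f_{m_2}=f_{m_1m_2}$) and applies Lemma 4 with the functionals $\langle\,\cdot\,,\hat{v}_0\rangle$ and $\langle\,\cdot\,,\hat{w}_0\rangle$. Your additional step of translating by some $m_0$ to secure $L_i(n_i)\neq 0$ is a sensible refinement of a detail the paper's terser proof leaves implicit (in the paper's application within Theorem 2 the coefficient is already nonzero at the chosen vectors).
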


\begin{proof}
Define actions of $F[M]$ on $V$ and $W$ by letting $mv = \pi(m)v$ and $mw = \sigma(m)w$ for all $v\in V, w\in W,$ and $m\in M$ respectively and then extending by linearity.  Thus $V$ and $W$ become $F[M]$-modules. Because the representations are each irreducible, $V$ and $W$ are simple as $F[M]$-modules. Since $\langle mv, \hat{v}_0\rangle = \langle mw, \hat{w}_0\rangle$ for all $m\in M$ are two equal linear functionals on $V$ and $W$, then $V\cong W$ as $F[M]$-modules by Lemma 4.  Equivalently, $(\pi,V)\cong(\sigma,W)$.
\end{proof}

Now we prove the second half of the Borel-Matsumoto Theorem.

\begin{theorem}
If $(\pi, V)$ and $(\sigma, W)$ are two irreducible representations of $M$ with $V^K$ and $W^K$ nonzero and isomorphic as $\mathcal{H}_K$-modules, then $(\pi,V)\cong(\sigma,W)$.
\end{theorem}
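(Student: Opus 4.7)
The plan is to produce matching matrix coefficients of $(\pi,V)$ and $(\sigma,W)$ and then apply Lemma 5. I would fix an $\mathcal{H}_K$-module isomorphism $T\colon V^K\to W^K$, pick any nonzero $u\in V^K$, and set $u':=T(u)\in W^K$. The goal is to produce $\hat{v}_0\in\hat{V}^K$ and $\hat{w}_0\in\hat{W}^K$ such that
\[
\langle \pi(m)u,\hat{v}_0\rangle=\langle \sigma(m)u',\hat{w}_0\rangle \qquad \text{for all } m\in M,
\]
with the common value nonzero at $m=1$; Lemma 5 then delivers $(\pi,V)\cong(\sigma,W)$ immediately.

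The bridge between the $M$-action and the $\mathcal{H}_K$-action is the averaging idempotent $e_K\in\mathcal{H}$ with $e_K(k)=1/|K|$ for $k\in K$ and $0$ otherwise, which is well-defined because $\operatorname{char} F\nmid |K|$. For each $m\in M$, the bi-$K$-averaged function $\phi_m:=e_K*f_m*e_K$ lies in $\mathcal{H}_K$, and an averaging computation of the same flavor as the one in the proof of Theorem 1 shows
\[
\langle \pi(\phi_m)u,\hat{v}_0\rangle=\langle \pi(m)u,\hat{v}_0\rangle
\]
whenever $u\in V^K$ and $\hat{v}_0\in\hat{V}^K$, with the analogous identity on the $W$-side. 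This is the mechanism that lets one replace an element of $M$ by an element of $\mathcal{H}_K$ inside a matrix coefficient against a $K$-fixed functional.

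To pick the contragredient vectors, I would take any linear functional $\ell$ on $V^K$ with $\ell(u)\neq 0$ and apply Lemma 2 to obtain $\hat{v}_0\in\hat{V}^K$ representing $\ell$. Transporting through the isomorphism gives $\ell':=\ell\circ T^{-1}$ on $W^K$ with $\ell'(u')=\ell(u)\neq 0$, and a second application of Lemma 2 produces $\hat{w}_0\in\hat{W}^K$ representing $\ell'$. Since $T$ is $\mathcal{H}_K$-linear and $\phi_m\in\mathcal{H}_K$, we have $T(\pi(\phi_m)u)=\sigma(\phi_m)u'$, so combining this with the identities from the previous paragraph yields
\[
\langle \pi(m)u,\hat{v}_0\rangle=\ell(\pi(\phi_m)u)=\ell'(\sigma(\phi_m)u')=\langle \sigma(m)u',\hat{w}_0\rangle,
\]
with the value at $m=1$ equal to $\ell(u)\neq 0$, and Lemma 5 finishes the argument.

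The main obstacle is exactly this translation step: an $\mathcal{H}_K$-module isomorphism encodes only how bi-$K$-invariant functions act on $K$-fixed vectors, whereas Lemma 5 demands information about how individual elements of $M$ act on arbitrary vectors. The averaging idempotent $e_K$, whose construction depends essentially on $\operatorname{char} F\nmid |K|$, is what makes this passage possible; once the identity $\langle\pi(\phi_m)u,\hat{v}_0\rangle=\langle\pi(m)u,\hat{v}_0\rangle$ is in place, the remainder of the argument is a clean assembly of Lemmas 2 and 5.
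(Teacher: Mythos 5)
Your proposal is correct and follows essentially the same route as the paper: both produce matching matrix coefficients by bi-$K$-averaging the delta functions $f_m$ into $\mathcal{H}_K$ (your $e_K * f_m * e_K$ is the paper's $\phi_K$ for $\phi = f_m$), use Lemma 2 to realize functionals on the $K$-fixed spaces by vectors in $\hat{V}^K$ and $\hat{W}^K$, absorb the averaging using $K$-fixedness of the vector and of the contragredient vector, and finish with Lemma 5. The only differences are cosmetic (you package the averaging as convolution with the idempotent $e_K$ and choose the vector before the functional, whereas the paper proves the averaging identity for all $\phi \in \mathcal{H}$ and then specializes).
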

\begin{proof}
Let $\lambda: V^K\rightarrow W^K$ be an isomorphism of $\mathcal{H}_K$-modules and $l: W^K\rightarrow F$ be a linear functional not equal to zero.  Then there exist $\hat{v}\in\hat{V}^K$ and $\hat{w}\in\hat{W}^K$ such that $(l\circ\lambda)(v) = \langle v, \hat{v}\rangle$ and $l(w) = \langle w, \hat{w}\rangle$ for all $v\in\hat{V}^K, w\in\hat{W}^K$.  Furthermore, there exist $w_0\in W^K, v_0\in V^K$ such that $\langle w_0, w\rangle\neq 0$ since $l$ is nontrivial and $v_0 = \lambda^{-1}(w_0)$ since $\lambda$ is an isomorphism.

Then for $\phi\in\mathcal{H}_K$, we have that

\begin{equation}\label{eq1}
    \langle\sigma(\phi)w_0,\hat{w}\rangle = \langle\sigma(\phi)\lambda(v_0), \hat{w}\rangle = \langle\lambda(\pi(\phi)v_0), \hat{w}\rangle = (l\circ\lambda)(\pi(\phi)v_0) = \langle\pi(\phi)v_0, \hat{v}\rangle.
\end{equation}

We show that equation \ref{eq1} holds for all $\phi\in\mathcal{H}$ as well as $\mathcal{H}_K$.  For $\phi\in\mathcal{H}$, define $\phi_K\in\mathcal{H}_K$ by 

\begin{equation*}
    \phi_K(x) = \frac{1}{|K|^2}\sum\limits_{k_1, k_2\in K}\phi(k_1xk_2)
\end{equation*}
for all $x\in M$.  By equation \ref{eq1}, then $\langle\pi(\phi_K)v_0, \hat{v}\rangle = \langle\sigma(\phi_K)w_0, \hat{w}\rangle$. Furthermore, we have that 

\begin{align*}
    \langle \pi(\phi_K)v_0, \hat{v}\rangle &= \langle\frac{1}{|K|^2}\sum\limits_{k_1, k_2\in K}\sum\limits_{x\in M}\phi(k_1xk_2)\pi(x)v_0, \hat{v}\rangle \\
    &= \frac{1}{|K|^2}\langle\sum\limits_{k_1, k_2\in K}\sum\limits_{x\in M}\phi(x)\pi(k_1)^{-1}\pi(x)\pi(k_2)^{-1}v_0,  \hat{v}\rangle \\
    &= \frac{1}{|K|^2}\langle\sum\limits_{k_1, k_2\in K}\pi(k_1)^{-1}\circ(\sum\limits_{x\in M}\phi(x)\pi(x))\circ\pi(k_2)^{-1}v_0, \hat{v}\rangle \\
    &=\frac{1}{|K|^2}\langle\sum\limits_{k_1, k_2\in K}\pi(k_1)^{-1}\pi(\phi)\pi(k_2)^{-1}v_0, \hat{v}\rangle \\
    &=\frac{1}{|K|^2}\sum\limits_{k_1, k_2\in K}\langle\pi(k_1)^{-1}\pi(\phi)\pi(k_2)^{-1}v_0, \hat{v}\rangle \\
    &=\frac{1}{|K|^2}\sum\limits_{k_1, k_2\in K}\langle\pi|_G(k_1)^{-1}\pi(\phi)\pi|_G(k_2)^{-1}v_0, \hat{v}\rangle \\
    &= \frac{1}{|K|^2}\sum\limits_{k_1, k_2\in K}\langle\pi(\phi)\pi|_G(k_2)^{-1}v_0, \hat{\pi}|_G(k_1)\hat{v}\rangle.\\
    &= \frac{1}{|K|^2} \sum_{k_1, k_2 \in K} \langle\pi(\phi)v_0, \hat{v}\rangle \\
    &= \langle\pi(\phi)v_0, \hat{v}\rangle.
\end{align*}
since $v_0\in V^K$ and $\hat{v}\in\hat{V}^K$.

Thus $\langle\pi(\phi_K)v_0, \hat{v}\rangle = \langle\pi(\phi)v_0, \hat{v}\rangle$ for all $\phi\in\mathcal{H}$.  Similarly, $\langle\sigma(\phi_K)w_0, \hat{w}\rangle = \langle\sigma(\phi)w_0, \hat{w}\rangle$.  
With this information, then, we have that $\langle\pi(\phi_K)v_0,\hat{v}\rangle = \langle\sigma(\phi_K)w_0,\hat{w}\rangle$ implies that $\langle\pi(\phi)v_0,\hat{v}\rangle = \langle\sigma(\phi)w_0, \hat{w}\rangle$.

Let $\phi_m\in\mathcal{H}$ for all $m\in M$ be the function that sends all x in M with $x\neq m$ to 0 and m to 1.  Then $\pi(\phi_m)v = \pi(m)v$ and $\sigma(\phi_m)w = \sigma(m)w$.

Thus, we have that $\langle\pi(m)v_0, \hat{v}\rangle = \langle\sigma(m)w_0, \hat{w}\rangle$ for all $m\in M$.  By Lemma 5, then, $(\pi,V)$ and $(\sigma,W)$ are equivalent.
\end{proof}

\section{Frobenius Reciprocity}
 Although Doty alluded to the fact that Frobenius Reciprocity holds for monoids \cite{doty}, he left it without proof.   For completeness, we give an explicit proof.

Let $M$ be a finite monoid, $G(M)$ its group of units, $N$ a submonoid of $M$, $G(N)$ its group of units, and $(\pi, V)$ a representation of $M$.  Define the vector space $Ind_N^M V$ as follows:

\begin{equation*}
\Ind_N^{M} V = \{f : M \rightarrow V \; | \; f(nm) = \pi(n)f(m)\, \quad \forall n\in N, \; m\in M\} 
\end{equation*}
Define $(\pi^M, Ind_N^M V)$ by $\pi^M(m)f(x) = f(xm)$ for all m.  

\begin{lemma}
The pair $(\pi^M, Ind_N^M V)$ is a representation of M.
\end{lemma}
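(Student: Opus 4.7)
The plan is to verify the two conditions that make $(\pi^M, \Ind_N^M V)$ a representation of $M$: first, that $\pi^M(m)$ actually maps $\Ind_N^M V$ into itself for every $m \in M$ (well-definedness), and second, that $\pi^M$ is a monoid homomorphism into $\on{End}(\Ind_N^M V)$, i.e., $\pi^M(m_1 m_2) = \pi^M(m_1) \circ \pi^M(m_2)$ and $\pi^M(1_M) = \on{id}$.

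For well-definedness, I would fix $f \in \Ind_N^M V$ and $m \in M$, set $g = \pi^M(m) f$, and check the intertwining property $g(nx) = \pi(n) g(x)$ for all $n \in N$, $x \in M$. This is a one-line computation using associativity in $M$: $g(nx) = f(nxm) = \pi(n) f(xm) = \pi(n) g(x)$, where the middle equality uses the defining property of $f \in \Ind_N^M V$ applied to the element $xm \in M$.

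For the homomorphism property, I would evaluate both sides on an arbitrary $f \in \Ind_N^M V$ and arbitrary $x \in M$. On one hand, $(\pi^M(m_1 m_2) f)(x) = f(x m_1 m_2)$ by definition. On the other hand, $(\pi^M(m_1) \pi^M(m_2) f)(x) = (\pi^M(m_2) f)(x m_1) = f(x m_1 m_2)$, again by associativity in $M$. Preservation of the identity is immediate: $(\pi^M(1_M) f)(x) = f(x \cdot 1_M) = f(x)$.

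There is no real obstacle here — the statement is essentially a bookkeeping check that the usual construction of induced representations for groups goes through unchanged once one observes that the definition of $\Ind_N^M V$ only uses left multiplication by elements of $N$ and right multiplication by elements of $M$, both of which are compatible with the right translation action defining $\pi^M$. The only mild subtlety worth flagging is that $N$ is a submonoid, not a subgroup, so one cannot invoke invertibility anywhere; but as the computations above show, the verification never needs inverses.
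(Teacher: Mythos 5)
Your proof is correct and follows essentially the same route as the paper: check that $\pi^M(m)$ preserves the defining condition $f(nx)=\pi(n)f(x)$ via associativity, then verify $\pi^M(m_1m_2)=\pi^M(m_1)\circ\pi^M(m_2)$ and $\pi^M(1)=\mathrm{id}$ by evaluating at a point. The only difference is that the paper also spells out the (routine) check that each $\pi^M(m)$ is a linear map, which you leave implicit; adding that one line would make the verification complete.
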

\begin{proof}
First, we check that $Ind_N^M V$ is closed under the action of $\pi^M(m)$. Trivially, if $f(nx)=\pi(n)f(x)$ then $\pi^M(m)f(nx) = f(nxm) = \pi(n)f(xm)$ for all $m\in M, n\in N$.  

We check that $\pi^M(m)$ is linear for all $m$.
$$\forall z \in F, \; \forall f,g \in Ind_N^M  \quad z \pi^M(m) f(x) = z f(xm) = \pi^M(m) (zf)(x) $$
$$\pi^M(m)(f + g)(x) = (f + g)(xm) = \pi^M(m)(f)(x) + \pi^M(m)(g)(x) $$

Now, we check that $\pi^M$ is a homomorphism of monoids.  Let $m, x, y\in M$.  Then $\pi^M(mx)f(y) = f(ymx) = \pi^M(x)f(ym) = \pi^M(m)\pi^M(x)f(y)$.  Finally, $\pi^M(1)f(x) = f(x)$, implying that $\pi^M$ maps the identity to the identity.  Clearly, then, $\pi^M(mx)=\pi^M(m)\pi^M(x)$, and $(\pi^M, Ind_N^M)$ is a representation of M.
\end{proof}
 Thus we can call $(\pi^M, Ind_N^M V)$ the induced representation of M.  We have that 

\begin{theorem}
If $(\pi, V)$ is a representation of N, a submonoid of M, and $(\sigma, W)$ a representation of M, then $\Hom_M(W,Ind_N^M V)\cong \Hom_N(W,V)$ as vector spaces.

\end{theorem}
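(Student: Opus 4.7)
The natural approach is to exhibit mutually inverse linear maps between the two Hom-spaces using evaluation at the monoid identity. Define
\[
T : \Hom_M(W, \Ind_N^M V) \longrightarrow \Hom_N(W, V), \qquad T(\Phi)(w) = \Phi(w)(1_M),
\]
and in the reverse direction
\[
S : \Hom_N(W, V) \longrightarrow \Hom_M(W, \Ind_N^M V), \qquad S(\psi)(w)(m) = \psi(\sigma(m)w).
\]
Linearity of both maps is immediate from the definitions, so the work consists of four verifications, each a short diagram chase.

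First, I would check that $T(\Phi)$ is $N$-equivariant: for $n \in N$ and $w \in W$, using the $M$-equivariance of $\Phi$ and the defining property $f(nm) = \pi(n)f(m)$ of elements of $\Ind_N^M V$, one computes
\[
T(\Phi)(\sigma(n)w) = \Phi(\sigma(n)w)(1_M) = \bigl(\pi^M(n)\Phi(w)\bigr)(1_M) = \Phi(w)(n) = \pi(n)\Phi(w)(1_M) = \pi(n)T(\Phi)(w).
\]
Second, I would verify that $S(\psi)(w)$ actually lies in $\Ind_N^M V$, by checking the transformation rule $S(\psi)(w)(nm) = \pi(n) S(\psi)(w)(m)$ using that $\sigma$ is a monoid homomorphism and $\psi$ is $N$-equivariant. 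Third, I would verify the $M$-equivariance of $S(\psi)$, which uses only that $\pi^M$ acts by right-translation of the argument.

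Finally I would show $T$ and $S$ are mutually inverse. In one direction, $T(S(\psi))(w) = S(\psi)(w)(1_M) = \psi(\sigma(1_M)w) = \psi(w)$, using that $\sigma$ preserves the identity. In the other, $S(T(\Phi))(w)(m) = T(\Phi)(\sigma(m)w) = \Phi(\sigma(m)w)(1_M) = \bigl(\pi^M(m)\Phi(w)\bigr)(1_M) = \Phi(w)(m)$, again by the $M$-equivariance of $\Phi$.

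There is no real obstacle here: unlike the group case there are no inverses of elements of $M$ or $N$ to worry about, so nothing in the usual group-theoretic argument needs to be repaired. The only mild subtlety is to remember that elements of $\Ind_N^M V$ are constrained only by the left $N$-translation rule, so evaluation at $1_M$ determines the values on all of $N$ but not on $M$; this is precisely why the $M$-equivariance of $\Phi$ (which lets us recover $\Phi(w)(m)$ from $\Phi(\sigma(m)w)(1_M)$) is exactly what is needed to make $S \circ T = \mathrm{id}$.
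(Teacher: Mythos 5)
Your proposal is correct and follows essentially the same route as the paper: evaluation at the identity $T(\Phi)(w)=\Phi(w)(1_M)$ and the inverse map $S(\psi)(w)(m)=\psi(\sigma(m)w)$ are exactly the maps $F$ and $G$ in the paper's proof, with the same four verifications and the same inverse computations. Your write-up of the equivariance checks is in fact cleaner than the paper's, but there is no difference in method.
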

\begin{proof}
For $\phi\in \Hom_M(W,Ind_N^M V)$, define $F: \Hom_M(W,Ind_N^M V)\rightarrow \Hom_N(W,V)$ by $F(\phi)$, such that $F(\phi)(w) = \phi(w)(1)$, 1 being the identity element of $M$.  We first show that $F(\phi)$ is linear.  Because $\phi$ is linear, 
$$F(\phi)(w + w_0) = \phi(w + w_0)(1) = \phi(w)(1) + \phi(w_0)(1) = F(\phi)(w) + F(\phi)(w_0)$$
and for $z\in F$,
$$F(\phi)(zw) = \phi(zw)(1) = z\phi(w)(1) = zF(\phi)(w)$$
We now claim that $F(\phi)$ is a morphism of $N$-modules For $n\in N$, 
\begin{align*} 
F(\phi)(\sigma(n)w) & = \phi(\sigma(n)w)(1) = \pi^M(n)\phi(\sigma(1)w)(1) \\
& = \phi(w)(n) = \pi(n)\phi(w)(1) = \pi(n)F(\phi)(w)
\end{align*}
Thus $F(\phi)$ is an $N$-module homomorphism from $W$ to $V$.  Since 
$$F(\phi + \psi) (w) = (\phi + \psi)(w)(1) = \phi(w)(1) +\psi(w)(1) = F(\phi)(w) + F(\psi)(w)$$
and $F(z \cdot \phi)(w) = (z\phi)(w)(1) = zF(\phi)(w)$, then F is a vector space homomorphism. For $\tau\in \Hom_N(W,V)$, let $G: \Hom_N(W,V)\rightarrow \Hom_M(W,Ind_N^M V)$ such that 
$$(G(\tau)(w))(m) = G(\tau)(w)(m) =  \tau(\sigma(m)w)$$
then, $\tau(\sigma(nm)w) = \tau(\sigma(n)\sigma(m)w) = \pi(n)\tau(\sigma(m)w)$, so $G(\tau)(w)$ is in $Ind_N^M$. We check that $G(\tau)(-)(m)$ is linear.  This follows from the definition: \newline
\begin{align*} 
G(\tau)(w + w_0)(m) 
& = \tau(\sigma(m)(w + w_0)) = \tau(\sigma(m)w + \sigma(m)w_0) \\
& = \tau(\sigma(m)w) + \tau(\sigma(m)w_0) = G(\tau)(w)(m) + G(\tau)(w_0)(m)
\end{align*} 
and for $z\in F$, we have
\[
G(\tau)(zw)(m) = \tau(\sigma(m)(zw)) = z\tau(\sigma(m)(w))
\]
Next, we check that $G(\tau)$ respects M.  We have that for $x\in M$, 
\begin{align*} 
G(\tau)(\sigma(x)w)(m) 
& = \tau(\sigma(m)\sigma(x)w) = \tau(\sigma(mx)w) \\
& = (\pi^M(x)\circ\tau)(\sigma(m)w) = \pi^M(x)(G(\tau)(w)(m))
\end{align*}
Thus $G(\tau)\in \Hom_M(W, Ind_N^M V)$.  Finally, we check that G itself is linear:
\begin{align*} 
G(\tau + \eta)(w)(m) 
& = (\tau + \eta)(\sigma(m)w) \\ 
& = \tau(\sigma(m)w) + \eta(\sigma(m)w) = G(\tau)(w)(m) + G(\eta)(w)(m)
\end{align*}
and for $k\in K, \quad  G(k\tau)(w)(m) = k(\tau(\sigma(w)m)) = k \cdot G(\tau)(w)(m)$.  Thus G is a homomorphism of vector spaces.

Now, we show that F and G are inverses.  First, we check the mapping $G\circ F: Hom_M(W,Ind_N^M)\rightarrow Hom_M(W,Ind_N^M)$.  Let $\phi\in Hom_M(W,Ind_N^M)$.  Then $G\circ F(\phi)$ works as follows.  Since $F(\phi)$ is the map sending w to $\phi(w)(1)$, 
\begin{align*} 
(G\circ F)(\phi)(w)(m) 
& = G(F(\phi))(w)(m) = F(\phi)(\sigma(m)w) \\
& = F(\phi)(\sigma(1*m)w) = \pi^M(m)F(\phi)(w) \\
& = \pi^M(m)\phi(w)(1) = \phi(w)(m)
\end{align*}
by definition of the induced representation.  Since $(G\circ F)(\phi)(w)(m) = \phi(w)(m)$, $G\circ F$ is the identity morphism on $Hom_M(W,Ind_N^W)$.

Next, we check $F\circ G: Hom_N(V,W)\rightarrow Hom_N(V,W)$.  Let $\tau\in Hom_N(V,W)$.  Then
\begin{align*} 
(F \circ G)(\tau)(w)(n) & = F(G(\tau))(w)(n) \\
& = G(\tau)(w)(1 \cdot n) = \tau(\pi(n)w) = \sigma(n) \tau(w) = \tau(w)(n)
\end{align*}
Thus $F\circ G$ is the identity morphism on $Hom_N(V, W)$.  Since we have that both $G\circ F$ and $F\circ G$ are identity morphisms on their respective domains, they are inverses.  Thus, we have that $Hom_M(W, Ind_N^M)\cong Hom_N(W,V)$ as vector spaces over $F$.
\end{proof}

\section{Further directions}
In a possible sequel, we would like to study smooth representations of $p$-adic reductive monoids.  In the group case, the Borel-Matsumoto theorem extends easily to smooth representations of $p$-adic reductive groups \cite{bump}.  The proof is virtually identical, with summation replaced by integration over a Haar measure.  A similar result may hold for $p$-adic reductive monoids; however, the authors ran into some difficulty defining a suitable measure.  Several subtle differences between the properties of smooth representations of $p$-adic reductive monoids and those of $p$-adic reductive groups prevented an immediate extension of the proof for the group case. A better description of smooth representations of $p$-adic reductive monoids may enable an alternative proof.

Also, for a finite reductive monoid $M$ with Borel subgroup $B$, we would like to explore reconstructing the irreducible representations of $M$ with nonzero $B$-fixed space from those of $\mathcal{H}(M,B)$.  The Borel-Matsumoto theorem guarantees the existence of a bijection between irreducible representations of $M$ with nonzero $B$-fixed space and irreducible representations of $\mathcal{H}(M,B)$; however, it does not explicitly construct the bijection.  In the finite reductive group case, Deligne and Lusztig used $\ell$-adic cohomology of certain varieties associated with $G$ to construct irreducible representations of $G$ \cite{deligne-lusztig}.  We believe that a similar technique could work in the monoid case.

\section{Acknowledgements}
This research was conducted at the 2018 University of Minnesota-Twin Cities REU in Algebraic Combinatorics. Our research was supported by NSF RTG grant DMS-1745638. We would like to thank Benjamin Brubaker and Andy Hardt for their help and support during our time in Minnesota.

\nocite{*}
\printbibliography

\Addresses

\end{document}